\newtheorem{theorem}{Theorem}
	\newtheorem{question}{Question}[section]
\newtheorem{thm}[theorem]{Theorem}
\newtheorem{cor}[theorem]{Corollary}
\newtheorem{lem}[theorem]{Lemma}
\theoremstyle{Proposition}
\newtheorem{prop}[theorem]{Proposition}
\theoremstyle{definition}
\newtheorem{deff}[theorem]{Definition}
\newtheorem{eg}[theorem]{Example}
\newtheorem{rem}[theorem]{Remark}
\begin{document}
	
	\title{A Study of $S$-Primary Decompositions}
	
	\author[1]{Tushar Singh}
	\author[2]{Ajim Uddin Ansari}
	\author[3]{ Shiv Datt Kumar }

	\affil[1, 3]{\small Department of Mathematics, Motilal Nehru National Institute of Technology Allahabad, Prayagraj 211004, India \vskip0.01in Emails: sjstusharsingh0019@gmail.com, tushar.2021rma11@mnnit.ac.in, sdt@mnnit.ac.in}
	\vskip0.05in
	\affil[2]{\small Department of Mathematics, CMP Degree College, University of Allahabad, Prayagraj-211002, India \vskip0.01in Email: ajimmatau@gmail.com}
	\date{}
	\maketitle
	\hrule
	
	 \begin{abstract}
	 	\noindent
	 	Let $R$ be a commutative ring with identity, and let $S \subseteq R$ be a multiplicative set. An ideal $Q$ of $R$ (disjoint from $S$) is said  to be $S$-primary if there exists an $s\in S$ such that for all $x,y\in R$ with $xy\in Q$, we have $sx\in Q$ or $sy\in rad(Q)$. Also, we say that an ideal  of $R$ is $S$-primary decomposable or has an $S$-primary decomposition if it can be written as a finite intersection of $S$-primary ideals. In this paper, first we provide an example of an $S$-Noetherian ring in which an ideal does not have a  primary decomposition. Then our  main aim of this paper is to  establish the existence and uniqueness of $S$-primary decomposition in $S$-Noetherian rings as an extension of a historical theorem of Lasker-Noether.
	\end{abstract}
	\smallskip
	\textbf{Keywords:} $S$-Noetherian ring, $S$-primary ideal, $S$-irreducible ideal, S-primary decomposition.\\
	\textbf{MSC(2010):}  13C05, 13B02,  13E05. 
	\hrule
	
	\smallskip

	\section{Introduction} \label{s1}
The theory of Noetherian rings has been playing an important role in the development of structure theory of commutative rings. One of the roots of this theory is the historical article \cite{ne21} by Noether in 1921. Recall that a ring is called  Noetherian  if it satisfies ascending chain condition on ideals. In the past few decades, several generalizations of Noetherian rings have been extensively studied by many authors because of its importance (see \cite{ah18},  \cite{sh16},  \cite{sh15}, \cite{ad02}, \cite{bj16}, and \cite{lw15}). 
As one of its crucial generalizations,  Anderson and Dumitrescu \cite{ad02} introduced $S$-Noetherian rings. 
A commutative ring $R$ with identity is called $S$-Noetherian,
where $S \subseteq R$ is a given multiplicative set, if for each ideal $I$ of
$R$, $sI \subseteq J \subseteq I$ for some $s \in  S$ and some finitely generated ideal $J$. 
Theory of primary decomposition, considered as a generalization of the factorization of an integer $n \in \mathbb{Z}$ into a product of prime powers, initiated by Lasker-Noether \cite{le05, ne21}  in their abstract treatment of commutative rings. Lasker-Noether proved that  in a commutative Noetherian ring, every ideal can be decomposed as an intersection, called primary decomposition, of finitely many primary ideals (popularly known as the Lasker-Noether decomposition
theorem). Due to its significance, this theory quickly grew as one of the basic tools of  commutative algebra and algebraic geometry. It gives an algebraic foundation for decomposing an algebraic variety into its irreducible components. Recently in \cite{me22}, Massaoud introduced the concept of $S$-primary ideals as a proper generalization of primary ideals. Let $R$ be a commutative ring with identity and $S \subseteq R$ a multiplicative set. A proper ideal $Q$ (disjoint from $S$) of $R$ is said to be $S$-primary if there exists an $s \in S$ such that for all $a,b \in R$ if $ab \in Q$, then $sa \in Q$ or $sb \in rad(Q)$. The author \cite{me22} investigated several properties of this class of ideals and showed that $S$-primary ideals enjoy analogue of many properties of primary ideals.
Given the significance of primary decomposition in Noetherian rings, a natural question arises: 

\begin{question}
	Can the idea of primary decomposition in Noetherian rings be extended to $S$-Noetherian rings? 
\end{question}

We provide a positive answer to the above question in this paper. A natural way 
to extend primary decomposition from Noetherian rings to the broader class of $S$-Noetherian rings is to replace "primary ideals" by "$S$-primary ideals" in the decomposition process. By doing this, we can logically adapt this powerful concept  that allows us to extend various  structural properties of Noetherian rings to  $S$-Noetherian rings.

In this paper, we introduce the concept of $S$-primary decomposition as a generalization of primary decomposition. We say that an ideal (disjoint from $S$) of a ring $R$ is $S$-primary decomposable or has an $S$-primary decomposition if it can be written as a finite intersection of $S$-primary ideals of $R$. First we  provide an example of an $S$-Noetherian ring in which primary decomposition does not exist (see Example \ref{non-laskerian}) which asserts that an $S$-Noetherian ring need not be a Laskerian ring in general. Then as one of our main results, we establish the existence of $S$-primary decomposition in $S$-Noetherian rings as a generalization of historical Lasker-Noether decomposition theorem (see Theorem \ref{sp} and Theorem \ref{on}). Among the other results,  we extend  first and second uniqueness theorems of primary decomposition to $S$-primary decomposition (see Theorem \ref{Q} and Theorem \ref{n}).
 
  Throughout the paper, $R$ will be a commutative ring with identity and $S$ be a multiplicative set of $R$ unless otherwise stated.

\section{$S$-Primary Decomposition in $S$-Noetherian Ring}

It is well known that  primary decomposition exists  in  a Noetherian ring. 
Recall from \cite{ah18} that a ring $R$ is said to have a \textit{Noetherian spectrum} if $R$ satisfies the ascending chain condition (ACC) on radical ideals. This is equivalent to the condition that $R$ satisfies the ACC on prime ideals, and each ideal has only finitely many prime ideals minimal over it. Also, a ring $R$  is said to be \textit{Laskerian} if each ideal of $R$ has a primary decomposition. 

Recall that \cite{ad02}, an ideal $I$ of $R$ is called $S$-finte if $sI\subseteq J\subseteq I$ for some finitely generated ideal $J$ of $R$ and some $s\in S$. Then $R$ is said to be an $S$-Noetherian ring if each ideal of $R$ is $S$-finite. We begin by providing an example of an $S$-Noetherian ring in which primary decomposition does not hold.

\begin{eg}\label{non-laskerian}
	Let  $R=F[x_1,x_2,\ldots, x_{n}, \ldots]$ be the polynomial ring  in infinitely many indeterminates over a field  $F$. Since $R$ has an ascending chain of prime ideals $(x_1)\subseteq (x_1,  x_2)\subseteq \cdots\subseteq  (x_1,x_2,\ldots, x_n)\subseteq\cdots$ which does not terminate, so $R$ has no Noetherian spectrum. This implies that $R$ is a non-Laskerian ring, by \cite[Theorem 4]{rg80}. Consider the multiplicative set $S=R\setminus\{0\}$. Then by  \cite[Proposition 2(a)]{ad02}, $R$ is an $S$-Noetherian ring. Hence $R$ is an $S$-Noetherian ring but not Laskerian.
\end{eg}

\noindent
Recall that let $f: R\longrightarrow S^{-1}R$ denote the usual homomorphism of rings given by $f(r)=\frac{r}{1}$. For any ideal $I$ of $R$, the  $f^{-1}(S^{-1}I)$ called the contraction of $I$ with respect to $S$, that is, $\{a\in R \hspace{0.2cm}| \hspace{0.2cm}\dfrac{a}{1}\in S^{-1}I\}$ is denoted by $S(I)$. Notice that $I\subseteq S(I)$. Thus we need an $S$-version of primary decomposition of ideals. Now we define the concept of $S$-primary decomposition of ideals as a generalization of primary decomposition.
\begin{deff}\label{dec.}
	\noindent
	Let $R$ be a ring and let $S$ be a multiplicative set of $R$. Let
	$I$ be an ideal of $R$ such that $I\cap S=\emptyset$. We say that $I$ admits an $S$-primary decomposition if $I$ is a finite intersection of $S$-primary ideals of $R$. In such a case, we say that $I$ is $S$-decomposable. An $S$-primary decomposition $I=\bigcap\limits_{i=1}^{n}Q_{i}$ of $I$ with $rad(Q_{i})=P_{i}$ for each $i\in\{1, 2,\ldots, n\}$ is said to be minimal if the following conditions hold:
	\leavevmode
	\begin{enumerate}
		\item $S(P_{i})\neq S(P_{j})$ for all distinct $i, j\in\{1, 2, \ldots, n\}$.
		\item $S(Q_{i})\nsupseteq\bigcap_{j\in\{1, 2, \ldots, n\}\setminus\{i\}}S(Q_{j})$ for each $i\in \{1,  2, \ldots, n\}$ (equivalently,\\
		$S(Q_{i})\nsupseteq \bigcap_{j\in\{1, 2, \ldots, n\}\setminus\{i\}}Q_{j}$ for each $i\in \{1,  2, \ldots, n\})$. 
	\end{enumerate}
\end{deff}

\noindent
 From definition \ref{dec.}, the concepts of $S$-primary decomposition and primary decomposition coincide for $S=\{1\}$. The following example shows that the concept of $S$-primary decomposition is a proper generalization of the concept of primary decomposition.

\begin{eg}
	\noindent
	Consider the Boolean ring $R=\prod_{n=1}^{\infty}\mathbb{Z}_{2}$ (countably infinite copies of  $\mathbb{Z}_{2})$. According to \cite[Theorem 1]{vd17}, the zero ideal $(0)=(0,0,0,\ldots)$ in $R$ does not have the primary decomposition. Clearly, $R$ is not Noetherian. Consider the multiplicative set $S=\{{1_R}=(1,1,1,\ldots),s=(1,0,0,\ldots)\}$. Let $I$ be an ideal of $R$. Either $s\in I$ or $s\notin I$. If $s\in I$, then $sI\subseteq Rs\subseteq I$ and as $Rs$ is a finitely generated ideal of $R$, it follows that $I$ is $S$-finite. If $s\notin I$, then $I\subseteq (0)\times\mathbb{Z}_{2}\times \mathbb{Z}_{2}\times\cdots$. In such a case, $sI\subseteq (0)\times (0)\times (0)\times\cdots\subseteq I$. Hence, $I$ is $S$-finite. This shows that  $R$ is an $S$-Noetherian ring. Next, we show that $(0)$ is an  $S$-primary ideal of $R$. First, we observe that $(0)\cap S=\emptyset$. Now, let $a=(a_n)_{n\in\mathbb{N}}$, $b=(b_n)_{n\in\mathbb{N}} \in R$ such that $ab=0$, where each $a_{i},  {b_{i}}\in \mathbb{Z}_{2}$. This implies that $a_nb_n=0$ for all $n\in \mathbb{N}$, in particular, $a_1b_1=0$. Then we have either $a_1=0$ or $b_1=0$. If $a_1=0$, then $sa=0$.  If $b_1=0$, then $sb=0$. Thus $(0)$ is an $S$-primary ideal. Therefore $(0)$ is $S$-primary decomposable.
\end{eg}

\noindent
Recall from \cite{fm69}, an ideal $I$ of the ring $R$ is called  \textit{irreducible} if $I = J \cap K$ for  some ideals $J$, $K$ of $R$, then either $I = J$ or $I = K$. It is well known that the classical proof of existence of primary decomposition in a Noetherian ring involves the concept of irreducible ideals. So we need $S$-version of irreducible ideals to prove the existence of $S$-primary decomposition in $S$-Noetherian rings.
 
\begin{deff}\label{S-iir.}
	An ideal $Q$ (disjoint from $S$) of the ring $R$ is called  $S$-irreducible if $s(I\cap J)\subseteq Q \subseteq I\cap J$ for some $s\in S$ and some ideals $I$, $J$ of $R$, then there exists $s'\in S$ such that either $ss'I\subseteq Q$ or $ss'J\subseteq Q$.
\end{deff}

\noindent
It is clear from the definition that every irreducible ideal is an $S$-irreducible ideal. However, the following example shows that an $S$-irreducibile ideal need not be irreducible.

\begin{eg}\label{fm}
	\noindent
	Let $R=\mathbb{Z}$, $S=\mathbb{Z}\setminus 3\mathbb{Z}$ and $I=6\mathbb{Z}$. Since $I=2\mathbb{Z}\cap 3\mathbb{Z}$, therefore $I$ is not an irreducible ideal of $R$. Now take $s=2\in S$. Then $2(3\mathbb{Z})=6\mathbb{Z}\subseteq I$. Thus $I$ is an $S$-irreducible ideal of $R$.
\end{eg}
\noindent
	Recall from \cite{ah20},  an ideal $Q$ (disjoint from $S$) of a ring $R$ is said to be  \textit{$S$-prime }if there exists an $s\in S$ such that for  $a,b\in R$ with $ab\in Q$, we have either $sa\in Q$ or $sb\in Q$.
Clearly, every $S$-prime ideal is $S$-primary.
The following example shows that the converse of this is not true in general.

\begin{eg}
	\noindent
	Consider  $R= \mathbb{Z}$, $Q=4\mathbb{Z}$, and  $S=\mathbb{Z}\setminus 2\mathbb{Z}$. Notice that   $4\in Q$ but  $2s\notin Q$ for all $s\in S$. This  implies that $Q$ is not an $S$-prime ideal of $R$. Obviously, $Q$ is a primary ideal of $R$ and hence $S$-primary.

\end{eg}	

	\noindent
	Recall from \cite[Proposition 2.5]{me22} that if $Q$ is an $S$-primary ideal of a ring $R$, then $P=\text{rad}(Q)$ is an $S$-prime ideal. In such a case, we say that $Q$ is an $P$-$S$-primary ideal of $R$.

\begin{prop}\label{ry}
	Let $S$ be a multiplicative set of a ring $R$. Then the following statements hold:
	\begin{enumerate}
		\item Finite intersection of $P$-$S$-primary ideals is $P$-$S$-primary.
		\item If $Q$ is a $P$-primary ideal of $R$ with $Q\cap S=\emptyset$, then for any ideal $J$ of $R$ with $J\cap S\neq\emptyset$, $Q \cap J$ is a $(P\cap rad(J))$-$S$-primary ideal of $R$.
	\end{enumerate}
\end{prop}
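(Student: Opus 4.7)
\medskip

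\noindent\textbf{Proof plan.}

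For part (1), the plan is to combine the witnesses multiplicatively. Let $Q_1,\dots,Q_n$ be $P$-$S$-primary with witnesses $s_1,\dots,s_n\in S$ respectively, and set $Q=\bigcap_{i=1}^n Q_i$ and $s=s_1s_2\cdots s_n\in S$. The radical assertion $\mathrm{rad}(Q)=P$ follows from the standard identity $\mathrm{rad}\bigl(\bigcap Q_i\bigr)=\bigcap\mathrm{rad}(Q_i)$. For the $S$-primary property, I would take $ab\in Q$ and split into two cases: if there exists some index $i$ with $s_ib\in P$, then multiplying by the remaining $s_j$'s shows $sb\in P=\mathrm{rad}(Q)$; otherwise $s_ia\in Q_i$ for every $i$, and again absorbing the other factors into each $Q_i$ (using that $Q_i$ is an ideal) gives $sa\in Q_i$ for all $i$, hence $sa\in Q$. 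This dichotomy is the one step that needs mild care, but it is routine.

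For part (2), the idea is that any element $s\in J\cap S$ automatically serves as an $S$-witness for $Q\cap J$. First I would record that $(Q\cap J)\cap S=\emptyset$ (since $Q\cap S=\emptyset$) and that $\mathrm{rad}(Q\cap J)=P\cap\mathrm{rad}(J)$. Fix any $s\in J\cap S$ and suppose $ab\in Q\cap J$. Because $Q$ is (classically) $P$-primary, either $a\in Q$ or $b\in P$. In the first case $sa\in Q$ (ideal property) and $sa\in J$ (since $s\in J$), so $sa\in Q\cap J$. In the second case $sb\in P$ and simultaneously $sb\in J\subseteq\mathrm{rad}(J)$, so $sb\in P\cap\mathrm{rad}(J)=\mathrm{rad}(Q\cap J)$. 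This verifies the defining condition of an $S$-primary ideal.

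Neither part presents a serious obstacle; the only thing to be careful about is the case analysis in (1), where one must ensure that a single $s$ works uniformly, and in (2), where one must notice that the hypothesis $J\cap S\neq\emptyset$ is precisely what allows an element of $J$ to be chosen from $S$ and play the role of the $S$-witness for both alternatives.
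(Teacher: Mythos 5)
Your proposal is correct and follows essentially the same route as the paper: in (1) you combine the witnesses into $s=s_1\cdots s_n$ and run the same dichotomy (the paper phrases it as a contradiction argument with the roles of the two factors swapped, but the content is identical), and in (2) you use an element of $J\cap S$ as the witness exactly as the paper does. The only cosmetic omission is that in (1) you do not explicitly record $Q\cap S=\emptyset$, which is immediate from $Q_i\cap S=\emptyset$.
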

\begin{proof}
	\leavevmode
	\begin{enumerate}
		\item Let $Q_1,Q_2,\ldots,Q_n$ be $P$-$S$-primary ideals, then $S\cap Q_{i} =\emptyset$ for each $i=1,2,\ldots,n$, and so  $S\cap(\bigcap\limits_{i=1}^{n}Q_{i}) =\emptyset$. Suppose $Q=\bigcap\limits_{i=1}^{n}Q_{i}$. Since each $Q_{i}$ is $P$-$S$-primary, $rad(Q)=rad(\bigcap\limits_{i=1}^{n}Q_{i})=\bigcap\limits_{i=1}^{n}rad(Q_{i})=P$. Now, let $xy\in Q$,  where $x,y\in R$ and with $sy\notin Q$ for all $s\in S$. Consequently, for every $s\in S$, there exists $k_{s}$ such that $xy\in Q_{k_{s}}$ and $sy\notin Q_{k_{s}}$. Let $s_{i}\in S$ be the element satisfying the $S$-primary property for $Q_{i}$. Since we have finitely many $Q_{i}$, put $s=s_{1}s_{2}\dots s_{n}\in S$. Now fix $s$ and assume that $xy\in Q$ but $sy\notin Q$. Thus, there exists $k$ such that $xy\in Q_{k}$ and $sy\notin Q_{k}$. Then for $s_{k}\in S$ we obtain $s_{k}x\in rad(Q_k)=P$ or $s_{k}y\in Q_{k}$. The latter case gives $sy\in Q_{k}$, a contradiction. Thus $sx\in rad(Q)=P$, and therefore $Q$ is $P$-$S$-primary.
		\item As $Q\cap S=\emptyset$, it follows that $(Q\cap J)\cap S=\emptyset$. By assumption, $J\cap S\neq\emptyset$.
		Let $s \in J\cap S$. Let $a, b\in R$ be such that $ab\in Q \cap J$. Either $a\in Q$	or $b\in rad(Q) = P$, since $Q$ is a $P$-primary ideal of $R$. Hence, either $sa\in Q \cap J$ or $sb \in P\cap J \subseteq P\cap rad(J) = rad(Q)\cap rad(J) = rad(Q \cap J)$. This proves that $Q\cap J$ is a $(P\cap rad(J))$-$S$-primary ideal of $R$.
	\end{enumerate}
\end{proof}

\noindent
If an ideal $I$ of a ring $R$ admits an $S$-primary decomposition, then we prove in Remark \ref{11} that it will admit a minimal $S$-primary decomposition.\\

\noindent
    Following \cite{sh16}, 
	 let  $E$ be a family of ideals of a ring $R$. An element $I\in E$ is said to be an \textit{$S$-maximal element} of $E$ if there exists an $s\in S$ such that for each $J\in E$, if $I\subseteq J$, then $sJ\subseteq I$. Also an increasing sequence $(I_{i})_{i\in\mathbb{N}}$ of ideals of $R$ is called \textit{$S$-stationary} if there exist a positive integer $k$ and $s\in S$ such that  $sI_{n}\subseteq I_{k}$ for all $n\geq k$.\\
		
\noindent	
The following theorem provides a connection between the concepts of $S$-irreducible ideals and $S$-primary ideals.

\begin{thm}\label{sp}
	\noindent
	Let R be an $S$-Noetherian ring. Then every $S$-irreducible ideal of $R$ is $S$-primary.
\end{thm}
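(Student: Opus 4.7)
The plan is to adapt the classical Noetherian proof ``irreducible implies primary'' in two stages: first establish a pairwise absorption statement using $S$-irreducibility together with the $S$-stationary chain property, then upgrade it to a single uniform multiplier $s^{*}\in S$ as demanded by the definition of $S$-primary.

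In the first stage, I would fix $a,b\in R$ with $ab\in Q$ and assume $a\notin S(Q)$, aiming to show $b\in S(\text{rad}(Q))$. Since $R$ is $S$-Noetherian, the chain $(Q:b)\subseteq(Q:b^{2})\subseteq\cdots$ is $S$-stationary, so one can pick $k\in\mathbb{N}$ and $s_{1}\in S$ with $s_{1}(Q:b^{n})\subseteq(Q:b^{k})$ for all $n\geq k$. Setting $I=Q+(b^{k})$ and $J=Q+(a)$, any $x\in I\cap J$ can be written as $x=q_{1}+rb^{k}=q_{2}+ta$ with $q_{1},q_{2}\in Q$; multiplying by $b$ and using $ab\in Q$ gives $rb^{k+1}\in Q$, so $s_{1}r\in(Q:b^{k})$ by the stationary bound, and hence $s_{1}x\in Q$. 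Thus $s_{1}(I\cap J)\subseteq Q\subseteq I\cap J$, and $S$-irreducibility supplies $s'\in S$ with $s_{1}s'I\subseteq Q$ or $s_{1}s'J\subseteq Q$. The second alternative forces $s_{1}s'a\in Q$, contradicting $a\notin S(Q)$; so the first must hold, giving $(s_{1}s'b)^{k}\in Q$ and therefore $s_{1}s'b\in\text{rad}(Q)$, i.e.\ $b\in S(\text{rad}(Q))$.

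For the second stage, I would invoke the following auxiliary observation valid in any $S$-Noetherian ring: for every ideal $L$ of $R$ there exists $t\in S$ with $t\cdot S(L)\subseteq L$. Indeed, $S(L)$ is itself $S$-finite, so $\tau\cdot S(L)\subseteq (g_{1},\dots,g_{n})\subseteq S(L)$ for some $\tau\in S$ and generators $g_{i}\in S(L)$; choosing $\tau_{i}\in S$ with $\tau_{i}g_{i}\in L$ and setting $t=\tau\tau_{1}\cdots\tau_{n}$ gives the claimed uniform element. Applying this with $L=Q$ and $L=\text{rad}(Q)$ yields $s_{Q},s_{P}\in S$ such that $s_{Q}\cdot S(Q)\subseteq Q$ and $s_{P}\cdot S(\text{rad}(Q))\subseteq\text{rad}(Q)$. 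Taking $s^{*}=s_{Q}s_{P}$, the pairwise dichotomy from Stage~1 upgrades to $s^{*}a\in Q$ or $s^{*}b\in\text{rad}(Q)$ for every $(a,b)$ with $ab\in Q$, which is exactly the $S$-primary condition with uniform witness $s^{*}$.

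The chain/irreducibility portion is a faithful $S$-analogue of the classical argument; the genuinely new difficulty, absent in the Noetherian setting, is the uniformity built into the definition of an $S$-primary ideal. Stage~1 alone produces only a multiplier depending on the particular pair $(a,b)$, and the main obstacle will be to deploy the $S$-Noetherian hypothesis a second time (on the saturations $S(Q)$ and $S(\text{rad}(Q))$) in order to convert that pairwise dichotomy into a single global witness $s^{*}$.
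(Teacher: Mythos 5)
Your Stage~1 is, up to interchanging the roles of $a$ and $b$, exactly the paper's argument: the paper forms the chain $A_n=(Q:a^n)$, invokes $S$-stationarity to get $s(I\cap J)\subseteq Q\subseteq I\cap J$ for $I=(a^k)+Q$, $J=(b)+Q$, and then applies $S$-irreducibility to conclude $ta\in rad(Q)$ for some $t$ depending on the pair. Where you genuinely diverge is Stage~2, and this is the more interesting point: the paper's proof stops at the pairwise dichotomy, producing for each pair $(a,b)$ with $ab\in Q$ a multiplier $t=ss'$ that depends on both the chain $(Q:a^n)$ and the particular ideals $I,J$, whereas Massaoud's definition of $S$-primary (which the paper states in the abstract and introduction) requires a single $s\in S$ working uniformly for all pairs. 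Your observation that in an $S$-Noetherian ring every ideal $L$ satisfies $t\cdot S(L)\subseteq L$ for some $t\in S$ (via $S$-finiteness of the saturation $S(L)$ and clearing denominators on finitely many generators) is correct, and applying it to $L=Q$ and $L=rad(Q)$ does convert the pairwise statement ``$a\in S(Q)$ or $b\in S(rad(Q))$'' into the uniform witness $s^{*}=s_Qs_P$. So your proof is not only valid but strictly more careful than the paper's: it closes a uniformity gap that the published argument leaves unaddressed. (The paper proves a statement of this flavour only later, in Lemma~\ref{R}, and only for ideals already known to be $S$-primary, so it cannot be cited to repair Theorem~\ref{sp} without circularity; your direct $S$-finiteness argument avoids that.)
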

\begin{proof}
	Suppose $Q$ is an $S$-irreducible ideal of $R$. Let $a,b\in R$ such that $ab\in Q$ and $sb\notin Q$ for all $s\in S$. Our aim is to show that there exists $t\in S$ such that $ta\in rad(Q)$.
 Consider  $A_n=\{x\in R\hspace{0.2cm}|\hspace{0.1cm} a^{n}x\in Q \}$ for $n\in\mathbb{N}.$ 
	Clearly each $A_n$ is an ideal of $R$ and $A_1\subseteq A_2\subseteq A_3\subseteq \cdots$ is an increasing chain of ideals of $R$. Since $R$ is $S$-Noetherian, by \cite[Theorem 2.3]{zb17}, this chain is  $S$-stationary, i.e., there exist $k\in \mathbb{N}$ and $s\in S$ such that $sA_n\subseteq A_k$ for all $n\geq k$. Consider the two ideals $I=\textless a^{k}\textgreater +\hspace{0.1cm} Q$ and $J=\textless b\textgreater +\hspace{0.1cm} Q$ of $R$. Then  $Q\subseteq I\cap J$. For the reverse containment, let $y\in I\cap J$. Write  $y=a^{k}z+q$ for some $z\in R$ and $q\in Q$. Since $ab\in Q$, $aJ\subseteq Q$; whence $ay\in Q$. Now $a^{k+1}z=a(a^{k}z)=a(y-q)\in Q$. This implies that $z\in A_{k+1}$, and so  $sz\in sA_{k+1}\subseteq A_k$. Consequently, $a^{k}sz\in Q $ which implies that  $a^{k}sz +sq=sy\in Q$. Thus we have $s(I\cap J)\subseteq Q\subseteq I\cap J$. This implies that there exists $s'\in S$ such that either $ss'I\subseteq Q$ or $ss'J\subseteq Q$ since $Q$ is  $S$-irreducible. If $ss'J\subseteq Q$, then $ss'b\in Q$  which is not possible. Therefore $ss'I\subseteq Q$ which implies that $ss'a^{k}\in Q$. Put $t=ss'\in S$. Then $(ta)^{k} \in Q$ and hence  $ta\in rad(Q)$, as desired.
\end{proof}

\noindent
Now we are in a position to prove the existence of $S$-primary decomposition in $S$-Noetherian rings as our main result.
	
\begin{thm}(\textbf{Existence of $S$-Primary Decomposition})\label{on}
	\noindent
	Let $R$ be an $S$-Noetherian ring. Then every proper ideal of $R$ disjoint from $S$ can be written as a finite intersection of $S$-primary ideals.
\end{thm}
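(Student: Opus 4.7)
My plan is to adapt the classical Lasker--Noether argument to the $S$-Noetherian setting, with the role of ``irreducible'' played by ``$S$-irreducible''. By Theorem \ref{sp}, every $S$-irreducible ideal of an $S$-Noetherian ring is $S$-primary, so it is enough to show that every proper ideal $I$ of $R$ disjoint from $S$ can be written as a finite intersection of $S$-irreducible ideals.

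I would argue by contradiction. Let $\Sigma$ be the family of proper ideals of $R$ disjoint from $S$ that are not finite intersections of $S$-irreducible ideals, and assume $\Sigma\neq\emptyset$. Using the standard fact that every nonempty family of ideals in an $S$-Noetherian ring has an $S$-maximal element, pick an $S$-maximal $I\in\Sigma$ with witness $s_0\in S$ (so $s_0J\subseteq I$ for every $J\in\Sigma$ containing $I$). Since $I\in\Sigma$ is in particular not $S$-irreducible, there exist $s\in S$ and ideals $J_1,J_2$ of $R$ with $s(J_1\cap J_2)\subseteq I\subseteq J_1\cap J_2$ and $ss'J_i\nsubseteq I$ for every $s'\in S$ and $i\in\{1,2\}$. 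Taking $s'=1$ forces $J_1,J_2\supsetneq I$, and if some $t\in J_1\cap S$ existed, then $tJ_2\subseteq J_1J_2\subseteq J_1\cap J_2$ would give $stJ_2\subseteq s(J_1\cap J_2)\subseteq I$, contradicting the choice of $J_2$ (with $s'=t$); so $J_1\cap S=J_2\cap S=\emptyset$.

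Next I would show $J_1,J_2\notin\Sigma$. If $J_1\in\Sigma$, the $S$-maximality of $I$ gives $s_0J_1\subseteq I$, whence $ss_0J_1\subseteq sI\subseteq I$ with $ss_0\in S$, again violating the non-$S$-irreducibility condition. Consequently each $J_i$ admits an $S$-irreducible decomposition $J_i=\bigcap_k Q_k^{(i)}$, and hence so does $J_1\cap J_2$.

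The principal obstacle lies in the final descent: non-$S$-irreducibility yields only the sandwich $s(J_1\cap J_2)\subseteq I\subseteq J_1\cap J_2$ rather than the literal equality $I=J_1\cap J_2$, so an $S$-irreducible decomposition of $J_1\cap J_2$ does not directly decompose $I$. To close the argument I would refine each $S$-primary factor $Q_k$ in the decomposition of $J_1\cap J_2$ by intersecting with an auxiliary ideal meeting $S$ --- for instance $I+\langle s\rangle$ --- and use Proposition \ref{ry}(2) to keep each refined factor $S$-primary while tightening the overall intersection down to $I$. Verifying that the refined intersection really equals $I$ on the nose (or otherwise devising a suitable cut-off that does) is the crux of the proof.
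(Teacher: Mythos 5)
Your overall strategy --- pass to an $S$-maximal counterexample and use Theorem \ref{sp} to convert ``$S$-irreducible'' into ``$S$-primary'' --- is the same as the paper's, and your intermediate steps (properness of the $J_i$, their disjointness from $S$, and $J_i\notin\Sigma$ via $S$-maximality) are carried out correctly. But the gap you flag at the end is a genuine one, and the repair you sketch does not close it. First, the identity $I=(J_1\cap J_2)\cap(I+\langle s\rangle)$ fails in general: for $y=a+rs$ with $a\in I$ and $y\in J_1\cap J_2$ one only gets $rs=y-a\in J_1\cap J_2$, hence $rs^{2}\in s(J_1\cap J_2)\subseteq I$, and nothing forces $rs\in I$; one would need $(I:s)=(I:s^{2})$, which the $s$ produced by non-$S$-irreducibility need not satisfy. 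Second, Proposition \ref{ry}(2) intersects a $P$-\emph{primary} ideal with an ideal meeting $S$; your factors are only $S$-primary, so the proposition does not apply to them as stated. (The paper does perform a refinement of exactly this flavour, but only in Remark \ref{11}, where $s$ is specially chosen so that $(I:s)=S(I)$, and only to upgrade an already existing $S$-primary decomposition to a minimal one --- not to produce the decomposition in the first place.)

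The paper's own proof sidesteps the whole difficulty by splitting $I$ with \emph{ordinary} irreducibility rather than $S$-irreducibility: since every irreducible ideal is $S$-irreducible, an $S$-maximal $I\in\Sigma$, being non-$S$-primary and hence non-$S$-irreducible by Theorem \ref{sp}, is in particular not irreducible, so $I=J\cap K$ \emph{exactly}, with $J,K\supsetneq I$; once $J,K\notin\Sigma$, the equality $I=J\cap K$ makes the descent immediate. I would restructure your argument along those lines: use the failure of $S$-irreducibility only to rule out that $I$ itself is $S$-primary, and use the failure of plain irreducibility to obtain the splitting of $I$ on the nose, rather than the sandwich $s(J_1\cap J_2)\subseteq I\subseteq J_1\cap J_2$ that you are then unable to tighten.
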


\begin{proof}
	\noindent
	Let $E$ be the collection of ideals of $R$ which are disjoint from $S$ and can not be written as a finite intersection of $S$-primary ideals. We wish to show $E=\emptyset$. On the contrary suppose  $E\neq\emptyset$.  Since $R$ is an  $S$-Noetherian ring, by \cite[Theorem 2.3]{zb17}, there exists an $S$-maximal element in $E$, say $I$. Evidently, $I$ is not an $S$-primary ideal, by Theorem \ref{sp}, $I$ is not an $S$-irreducible ideal, and so  $I$ is not an irreducible ideal. This implies that  $I=J\cap K$ for some ideals $J$ and $K$ of $R$ with $I\neq J$ and $I\neq K$. Since $I$ is not $S$-irreducible, $sJ\nsubseteq I$ and $sK\nsubseteq I$ for all $s\in S$. Now we claim that $J$, $K\notin E$. For this, 
	if $J$ (respectively, $K$) belongs to $E$, then since $I$ is an $S$-maximal element of $E$ and $I\subset J$ (respectively, $I\subset K$), there exists $s'$ (respectively, $s''$) from $S$ such that $s'J \subseteq I$ (respectively, $s''K\subseteq I$). This is not possible, as $I$ is not $S$-irreducible. Therefore $J, K\notin E$. This implies that $J$ and $K$ can be written as a finite intersection of $S$-primary ideals. Consequently, $I$ can also be written as a finite intersection of $S$-primary ideals since  $I=J\cap K$, a contradiction as $I\in E$. Thus $E=\emptyset$, i.e., every proper ideal of $R$ disjoint from $S$ can be written as a finite intersection of $S$-primary ideals. 
	\end{proof}

\noindent
Recall that if $I$ is any ideal of $R$, the radical of $I$ is $rad(I)=\{x\in R \hspace{0.1cm}| \hspace{0.1cm}x^{n}\in I \hspace{0.1cm}for\hspace{0.1cm} some \hspace{0.1cm}n\textgreater 0\}$. If $I=rad(I)$, then $I$ is called a radical ideal.

\begin{cor}\label{rad ideal}
	\noindent
	Let $R$ be an $S$-Noetherian ring. Then every radical ideal $I$ (disjoint from $S$) of $R$ is the intersection of finitely many $S$-prime ideals.
\end{cor}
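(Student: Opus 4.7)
The plan is to deduce this corollary directly from Theorem \ref{on} (existence of $S$-primary decomposition) together with the fact (cited earlier in the paper from \cite[Proposition 2.5]{me22}) that the radical of an $S$-primary ideal is $S$-prime.

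First I would note that since $I$ is disjoint from $S$ (and $1\in S$), $I$ is a proper ideal. Applying Theorem \ref{on}, write $I=\bigcap_{i=1}^{n}Q_{i}$ as a finite intersection of $S$-primary ideals. Taking radicals and using the standard identity $\mathrm{rad}(\bigcap_{i=1}^{n}Q_i)=\bigcap_{i=1}^{n}\mathrm{rad}(Q_i)$ together with the hypothesis that $I$ is a radical ideal, I obtain
\[
I=\mathrm{rad}(I)=\bigcap_{i=1}^{n}\mathrm{rad}(Q_{i}).
\]
Setting $P_{i}=\mathrm{rad}(Q_{i})$, the recalled result from \cite[Proposition 2.5]{me22} gives that each $P_{i}$ satisfies the $S$-prime condition in terms of elements.

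The only small check is that each $P_{i}$ is disjoint from $S$, which is required by the definition of $S$-prime. This follows because $S$ is multiplicatively closed: if some $s\in S$ lay in $P_{i}=\mathrm{rad}(Q_{i})$, then $s^{m}\in Q_{i}$ for some $m\geq 1$, but $s^{m}\in S$, contradicting $Q_{i}\cap S=\emptyset$. Hence each $P_{i}$ is a genuine $S$-prime ideal and $I=\bigcap_{i=1}^{n}P_{i}$ is the desired presentation.

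I do not anticipate any real obstacle here; the argument is essentially the $S$-analogue of the classical deduction of the ``radical = intersection of minimal primes'' statement from primary decomposition, and every ingredient (Theorem \ref{on}, the radical-of-intersection identity, and the $S$-primeness of $\mathrm{rad}(Q)$ for $S$-primary $Q$) is already in hand. The only point worth being explicit about in the write-up is the disjointness of $\mathrm{rad}(Q_{i})$ from $S$, which is easy but must be mentioned to justify calling the $P_{i}$ $S$-prime.
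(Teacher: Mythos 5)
Your proposal is correct and follows essentially the same route as the paper: apply Theorem \ref{on}, take radicals using $\mathrm{rad}(\bigcap Q_i)=\bigcap \mathrm{rad}(Q_i)$ and $I=\mathrm{rad}(I)$, and invoke \cite[Proposition 2.5]{me22} to see each $\mathrm{rad}(Q_i)$ is $S$-prime. Your explicit verification that each $\mathrm{rad}(Q_i)$ is disjoint from $S$ is a small point the paper leaves implicit, and it is a welcome addition.
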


\begin{proof}
	By Theorem \ref{on}, there exist finitely many $S$-primary ideals $Q_1,Q_2,\ldots,Q_r$ such that $I=Q_1\cap Q_2\cap\cdots\cap Q_r$. Also by \cite[Proposition 2.5]{me22}, $rad ({Q_i})=P_i$ is $S$-prime for $i=1,2,\ldots,r$. Consequently, $I=rad(I)=P_1\cap P_2\cap\cdots\cap P_r$.
	This completes the proof.
\end{proof}
\begin{rem}\label{11}
	\noindent
	Let $R$ be a ring, and let $S$ be a multiplicative set of $R$. Let $I$ be an ideal of $R$ such that $I\cap S=\emptyset$. Suppose that $I$ admits an $S$-primary decomposition. (In Theorem \ref{on}, we have shown that if $R$ is $S$-Noetherian, then any ideal $I$ of $R$ disjoint from $S$ admits an $S$-primary decomposition). Let $I=\bigcap\limits_{i=1}^{n}Q_{i}$ be an $S$-primary decomposition of $I$ with $Q_{i}$ is a $P_{i}$-$S$-primary ideal of $R$ for each $i\in\{1,2,\ldots, n\}$. Then, by \cite[Proposition 2.7]{me22}, $S^{-1}Q_{i}$ is $S^{-1}P_{i}$-primary, $S(I)=\bigcap\limits_{i=1}^{n}S(Q_{i})$ is a primary decomposition with $S(Q_{i})$ is a $S(P_{i})$-primary for each $i\in\{1,\ldots, n\}$. Let $1\leq i\leq n$. Let $s_{i}\in S$ be such that it satisfies the $S$-primary property of $Q_{i}$. Notice that $(P_{i}:s_{i})$ is a prime ideal of $R$ and $(Q_{i}:s_{i})$ is a $(P_{i}:s_{i})$-primary ideal of $R$. Let $s=\prod_{i=1}^{n}s_{i}$. Then $s\in S$. Observe that $(Q_{i}:s_{i})=(Q_{i}:s)$, $(P_{i}:s_{i})=(P_{i}:s)$, $S(Q_{i})=(Q_{i}:s)$, and $S(P_{i})=(P_{i}:s)$. From $I=\bigcap\limits_{i=1}^{n}Q_{i}$, it follows that $(I:s)=\bigcap\limits_{i=1}^{n}(Q_{i}:s)=\bigcap\limits_{i=1}^{n}S(Q_{i})=S(I)$. Let $k$ of the $S(P_1), \ldots, S(P_n)$ be distinct. After a suitable rearrangement of $\{1, \ldots, n\}$, we can assume without loss of generality that $S(P_{1}),\ldots, S(P_k)$ are distinct among $S(P_{1}),\ldots, S(P_n)$. Let $A_{1}=\{j\in\{1, \ldots, n\}\mid S(Q_{j})~ is ~S(P_{1})$-primary$\}$, \ldots, $A_{k}=\{j\in\{1, \ldots, n\}\mid S(Q_{j})~ is ~S(P_{k})$-primary $\}$. 
	
	From the above discussion, it is evident that $1 \in A_1,\ldots, k\in A_{k}$, and $\{1,\ldots,n\}=\bigcup\limits_{t=1}^{k}A_{t}$. Let $1\leq t\leq k$. Notice that $\bigcap_{j\in A_{t}}S(Q_{j})$ is $S(P_{t})$-primary by \cite[Lemma 4.3]{fm69}. It is convenient to denote $\bigcap_{j\in A_{t}}Q_{j}$ by $I'_{t}$. Thus $(I:s)=S(I)=S(I'_{1})\cap S(I'_{2})\cap \cdots\cap S(I'_{k})$ with $S(I'_{t})$ is $S(P_{t})$-primary for each $t\in \{1,\ldots, k\}$ and $S(P_1),\ldots, S(P_k)$ are distinct. After omitting those $S(I'_{i})$ such that $S(I'_{i})\supseteq \bigcap_{t\in\{1,\ldots, k\}\setminus\{i\}}S(I'_{t})$ from the intersection, we can assume  without loss of generality that $(I:s)=S(I)=\bigcap\limits_{t=1}^{k}S(I'_{t})$ is a minimal primary decomposition of $S(I)$. Next, we claim that $I=(I:s)\cap (I+Rs)$. It is clear that $I\subseteq (I:s)\cap (I+Rs)$. Let $y\in(I:s)\cap (I+Rs)$. Then $ys\in I$ and $y=a+rs$ for some $r\in R$. This implies that $ys=as+rs^{2}$ and so, $rs^{2}\in I$. Hence, $r\in S(I)=(I:s)$. Therefore, $y=a+rs\in I$. This shows that $(I:s)\cap (I+Rs)\subseteq I$. Thus $I=(I:s)\cap (I+Rs)$ and hence, $I=(\bigcap\limits_{t=1}^{k}S(I'_{t})\cap (I+Rs))=\bigcap\limits_{t=1}^{k}(S(I'_{t})\cap (I+Rs))$. Let $1\leq t\leq k$.  For convenience, let us denote $S(I'_{t})\cap (I+Rs)$ by $Q'_{t}$. As $S(I'_{t})$ is $S(P_{t})$-primary with $S(I'_{t})\cap S=\emptyset$ and $(I+Rs)\cap S\neq \emptyset$, we obtain from Proposition \ref{ry}(2) that $Q'_{t}$ is $S(P_{t})\cap rad(I+Rs)$-$S$-primary. Since $S(S(P_{t}))=S(P_{t})$, $S(rad(I+Rs))=R$, it follows that $S(S(P_{t})\cap rad(I+Rs))=S(P_{t})$. Notice that $S(Q'_{t})=S(S(I'_{t})\cap (I+Rs))=S(I'_{t})$, as $S(S(I'_{t}))=S(I'_{t})$ and $S(I+Rs)=R$. Hence, for all distinct $i, j\in\{1,\ldots, k\}$, $S(S(P_{i})\cap rad(I+Rs))\neq S(S(P_{j})\cap rad(I+Rs))$ and for each $i$ with $1\leq i\leq t$, $S(Q'_{i})\nsupseteq \bigcap_{t\in\{1,\ldots, k\}\setminus\{i\}}S(Q'_{t})$. Therefore, $I=\bigcap\limits_{t=1}^{k}Q'_{t}$ is a minimal $S$-primary decomposition.

\end{rem}

\noindent
Recall from \cite{ah20}, let $R$ be a ring, $S\subseteq R$ a multiplicative set and $I$ an ideal of $R$ disjoint from $S$. Let $P$ be an $S$-prime ideal of $R$ such that $I\subseteq P$. Then $P$ is said to be a \textit{minimal $S$-prime ideal } over $I$ if $P$ is minimal in the set of the $S$-prime ideals containing $I$. Also Ahmed \cite[Remark 2]{ah20} proves in $S$-Noetherian rings that the set of minimal $S$-prime ideals is finite if $S$ is a finite multiplicative set.

\noindent
A prime ideal is said to be a \textit{minimal prime ideal} if it is a minimal prime ideal over the zero ideal. Emmy Noether showed that in a Noetherian ring, there are only finitely many minimal prime ideals over any given ideal \cite[ Theorem 88]{ki74}. A natural question arises: 
\begin{question}\label{tion}
	Is the collection of minimal prime ideals in $S$-Noetherian rings finite for any multiplicative set $S$?
\end{question}
The answer to the above question is negative. In the following example, we provide an $S$-Noetherian ring that has infinitely many minimal prime ideals.

\begin{eg}
	Consider the ring $R=\dfrac{F[x_1,x_2, \ldots, x_{n}, \ldots]}{(x_{i}x_{j}; i\neq j,i,j\in\mathbb{N})}$, where $F$ is a field. Let $y_{i}={\overline{x}}_i$ be the image of $x_i$ under the canonical map. Consider the  multiplicative set $S=\{y_1^{n}: n\in\mathbb{N}\cup \{0\}$\} of $R$. Then $(y_1)\subseteq (y_1,  y_2)\subseteq\cdots \subseteq (y_1, y_2,\ldots, y_n) \subseteq \cdots$ is an ascending chain of ideals of $R$ which is not stationary. Consequently, $R$ is not a Noetherian ring. Evidently,  $R$ is an $S$-Noetherian ring  (see \cite[Example 4]{ts23}). Note that if $P\subset R$ is a prime ideal, then there can be at most one $i$ such that $y_i\notin P$. It follows that if $P_i$ is the ideal generated by all the $y_j$ for $j\neq i$, each $P_i$ is prime since $R/P_i\cong F[x_i]$. Clearly, every $P_{i}$ is a minimal prime ideal of $R$. Thus $R$ has infinitely many minimal prime ideals.
\end{eg}
\begin{question}
	Under what condition the set of minimal prime ideals is finite in $S$-Noetherian rings?
\end{question}
\noindent
We give an answer to the above question under a mild condition on the multiplicative set $S\subseteq R$ (see Theorem \ref{min}).

\begin{rem}\label{ps}
Let $R$ be a ring, $S\subseteq R$ be a multiplicative set, and $(0)=Q_1\cap Q_2\cap\cdots\cap Q_r$ be a minimal $S$-primary decomposition, where $Q_{i}$ is $P_{i}$-$S$-primary ideal of $R$. If $Z(R/P_{i})\cap \overline{S}_{i}=\emptyset$,  where $\overline{S}_{i}=\{s+P_{i}\mid s\in S\}$ and $Z(R/P_{i})$ denotes the set of zero divisors of $R/P_{i}$ for all $i=1, 2, \ldots, r$. Then each $P_{i}$ is a prime ideal of $R$. For this, we will show that $(P_{i} : s) = P_{i}$ for all $s\in S$ and $i=1, 2, \ldots, r$. It is clear that $P_{i}\subseteq (P_{i}: s)$. Conversely, let $s\in S$ and $x\in (P_{i}: s)$, then $sx\in P_{i}$; so $(s+P_{i})(x+P_{i})= P_{i}$. Thus $x\in P_{i}$ since $Z(R/P_{i})\cap\overline{S}_{i} = \emptyset$. Now, if $P_{i}$ is a $S$-prime ideal of $R$, then by \cite[Proposition 1]{ah20}, $(P_{i}: s_{i})$ is a prime ideal of $R$ for some $s_{i}\in S$. Since $P_{i}= (P_{i}: s_{i})$, $P_{i}$ is a prime ideal of $R$.
\end{rem}

\begin{theorem}\label{min}
Let $R$ be an $S$-Noetherian ring, $S\subseteq R$ a multiplicative set, and $(0)=Q_1\cap Q_2\cap\cdots\cap Q_r$ be a minimal $S$-primary decomposition, where $Q_{i}$ is $P_{i}$-$S$-primary ideal of $R$. If $Z(R/P_{i})\cap \overline{S}_{i}=\emptyset$, then minimal prime ideals of $R$ are in the set $\{P_{i}\mid i=1, 2,\ldots r\}$.
\end{theorem}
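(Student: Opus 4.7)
The plan is to first upgrade each $P_i$ from an $S$-prime ideal to an honest prime ideal by invoking Remark \ref{ps}. Under the hypothesis $Z(R/P_i)\cap\overline{S}_i = \emptyset$, that remark already shows $(P_i:s)=P_i$ for every $s\in S$, and combined with \cite[Proposition 1]{ah20} this forces $P_i$ to be a prime ideal of $R$. This step is the crucial one, because without genuine primeness we could not conclude minimality in the classical sense at the end.

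Next, I would pick an arbitrary minimal prime ideal $P$ of $R$ and show that $P$ must coincide with some $P_i$. Since $(0)\subseteq P$ and $(0)=Q_1\cap Q_2\cap\cdots\cap Q_r$, we have
\[
Q_1 Q_2\cdots Q_r\subseteq Q_1\cap Q_2\cap\cdots\cap Q_r=(0)\subseteq P.
\]
Primeness of $P$ then gives $Q_i\subseteq P$ for some $i\in\{1,2,\ldots,r\}$. Passing to radicals and using that $P$ is radical, we obtain
\[
P_i = rad(Q_i)\subseteq rad(P)=P.
\]

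Finally, since $P_i$ is itself a prime ideal of $R$ (by the first step) and contains $(0)$, while $P$ is by assumption a minimal prime over $(0)$ with $P_i\subseteq P$, the minimality of $P$ forces $P_i=P$. Thus every minimal prime of $R$ lies in the set $\{P_1,P_2,\ldots,P_r\}$, as claimed.

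The argument is essentially a two-line consequence of the hypotheses once the primeness upgrade from Remark \ref{ps} is in place; the only potential obstacle is conceptual rather than technical, namely recognizing that the hypothesis $Z(R/P_i)\cap\overline{S}_i=\emptyset$ is precisely what bridges the gap between $S$-primeness of the radicals (guaranteed for free in any minimal $S$-primary decomposition via \cite[Proposition 2.5]{me22}) and genuine primeness (needed to apply the minimal-prime argument). No $S$-Noetherian hypothesis beyond what is implicitly used to secure the existence of a minimal decomposition is actually required in this step.
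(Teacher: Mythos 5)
Your proof is correct and takes essentially the same route as the paper: both arguments first upgrade each $P_i$ to a genuine prime ideal via Remark \ref{ps}, then show that any minimal prime $P$ must contain some $P_i$ and conclude $P=P_i$ by minimality. The only cosmetic difference is that the paper applies \cite[Proposition 1.11(ii)]{fm69} to the containment $\bigcap_{i=1}^{r}P_i=rad(0)\subseteq P$, whereas you re-derive the same fact from the product $Q_1Q_2\cdots Q_r\subseteq P$ and then pass to radicals.
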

\begin{proof}
 Since $(0)=Q_1\cap Q_2\cap\cdots\cap Q_r$, $rad~(0)=P_1\cap P_2\cap\cdots\cap P_r$, where $P_{i}=rad(Q_{i})$ $(1\leq i\leq r)$ is an $S$-prime ideal of $R$  By Remark \ref{ps}, each $P_i$ $(1\leq i\leq r)$ is a prime ideal of $R$. Let $P'$ be any minimal prime ideal of  $R$. Then $(0)\subseteq P'$, and so $rad~(0)\subseteq rad(P')=P'$. This implies that $P_1\cap P_2\cap\cdots\cap P_r \subseteq P'$.  By \cite[Proposition 1.11(ii)]{fm69}, there exists $j\in \{1,2,\ldots,r\}$ such that $P_{j}\subseteq P'$. Since $P'$ is a minimal prime ideal, $P'=P_{j}$, as desired. 
\end{proof}

\begin{prop}\label{k}
	\noindent
	Let $Q_{1},Q_{2} ,\ldots,Q_{n}$ be ideals of $R$, and  $P$ be an $S$-prime ideal containing $\bigcap\limits_{k=1}^{n}Q_{k}$. Then there exists $s\in S$ such that $sQ_k\subseteq P$ for some $k$. In particular, if  $P=\bigcap\limits_{k=1}^{n}Q_{k}$, then $sQ_{k}\subseteq P\subseteq Q_{k}$ for some $k$.
\end{prop}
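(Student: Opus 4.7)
The plan is to adapt the classical proof that a prime containing a finite intersection of ideals must contain one of them, paying a power of the $S$-prime witness $s$ to compensate for the weakening. Fix $s\in S$ witnessing that $P$ is $S$-prime, so that $ab\in P$ implies $sa\in P$ or $sb\in P$ for all $a,b\in R$.

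First I would prove an auxiliary claim by induction on $m\geq 1$: if $a_1 a_2\cdots a_m\in P$, then $s^{m-1}a_i\in P$ for some $i\in\{1,\ldots,m\}$. The base case $m=2$ is exactly the $S$-prime property, and for the inductive step I would factor $a_1(a_2\cdots a_m)\in P$ and apply $S$-primeness: either $sa_1\in P$, in which case $s^{m-1}a_1=s^{m-2}(sa_1)\in P$, or $(sa_2)a_3\cdots a_m\in P$, to which the inductive hypothesis applies as a product of $m-1$ factors; in the latter case the conclusion is $s^{m-2}b_i\in P$ for $b_1=sa_2$ or $b_j=a_{j+1}$, and one multiplication by $s$ upgrades this to $s^{m-1}a_i\in P$.

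With the claim in hand, I would argue by contradiction. Suppose no $s'\in S$ and no index $k$ satisfy $s'Q_k\subseteq P$. Since $s^{n-1}\in S$ by multiplicative closure, for each $k\in\{1,\ldots,n\}$ there exists $x_k\in Q_k$ with $s^{n-1}x_k\notin P$. Then $x_1 x_2\cdots x_n\in\bigcap_{k=1}^n Q_k\subseteq P$, so the auxiliary claim forces $s^{n-1}x_i\in P$ for some $i$, contradicting the choice of $x_i$. This proves the first assertion. The ``in particular'' statement is immediate: if $P=\bigcap_{k=1}^n Q_k$, then $P\subseteq Q_k$ for every $k$, and the first part yields $s\in S$ and $k$ with $sQ_k\subseteq P\subseteq Q_k$.

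The only delicate point is correctly bookkeeping the exponent: each unwrapping of the $S$-prime property strips off one factor at the cost of multiplying by $s$, so after peeling off all $n$ factors $s^{n-1}$ is the right power, and the induction needs to be set up so that when one of the intermediate factors is already of the form $sa_j$, the final exponent still comes out to $n-1$ rather than $n$.
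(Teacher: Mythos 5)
Your proof is correct and follows essentially the same route as the paper's: negate the conclusion, choose $x_k\in Q_k$ whose relevant $S$-multiple avoids $P$, and derive a contradiction from $x_1x_2\cdots x_n\in\bigcap_{k=1}^{n}Q_k\subseteq P$. The only differences are that the paper cites an external result (Proposition 4 of Ahmed--Achraf) for the fact that $S$-primeness extends to $n$-fold products, whereas you prove it inline by induction with the explicit witness $s^{n-1}$; and by fixing $s^{n-1}$ \emph{before} selecting the $x_k$ you handle the order of quantifiers more carefully than the paper's own wording does.
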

\begin{proof}
	\noindent
	On contrary suppose $sQ_{k}\nsubseteq P$ for all $s\in S$ and $k= 1,2,\dots, n$. Then for each $k$, there exists $x_{k}\in Q_k$ such that $sx_{k}\notin P$ for all $s\in S$. Evidently, $x_1x_2\ldots x_n\in Q_1Q_2\ldots Q_n$. This implies that $x_1x_2\cdots x_n\in Q_1Q_2\ldots Q_n\subseteq \bigcap\limits_{k=1}^{n}Q_{k}\subseteq P$. Since $P$ is an $S$-prime ideal, by \cite[Proposition 4]{ah20}, there exist $s'\in S$ and $j\in\{1,2,\ldots, n\}$ such that $s'x_{j}\in P$, a contradiction. Hence there exist $s\in S$ and $k\in\{1,2,\ldots,n\}$ such that $sQ_{k}\subseteq P$. Finally, if $P=\bigcap\limits_{k=1}^{n}Q_{k}$, then by above argument there exist $s\in S$ and $k\in\{1,2,\ldots, n\}$ such that $sQ_{k}\subseteq P\subseteq Q_{k}$.
\end{proof}

\noindent
Now we prove $S$-version of the $1st$ uniqueness theorem:
\begin{thm}$(\textbf{$S$-version of 1st $S$-uniqueness theorem})$.\label{Q}
	\noindent
	Let $R$ be a ring and let $S$ be a multiplicative set of $R$. Let $I$ be an ideal of $R$ which admits $S$-primary decomposition. Let $I=\bigcap\limits_{i=1 }^{n}Q_i$ be a minimal $S$-primary decomposition, where $Q_i$ is $P_i$-$S$-primary for each $i\in \{1,  2, \ldots, n\}$. Then the $S(P_i)$ are precisely the prime ideals which occur in the set of ideals $S(rad(I : x))$ $(x\in R)$, and hence are independent of the particular $S$-decomposition of $I$.
\end{thm}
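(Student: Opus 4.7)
The plan is to reduce the $S$-uniqueness statement to the classical first uniqueness theorem for primary decomposition inside $R$ itself, via the contraction operator $S(\cdot)$.

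First, I would verify that $S(I)=\bigcap_{i=1}^{n}S(Q_{i})$ is a \emph{minimal primary decomposition} of $S(I)$ in $R$. The equality comes from the general identity $S(J\cap K)=S(J)\cap S(K)$, which is immediate from the definition of $S(\cdot)$. Each $S(Q_{i})$ is $S(P_{i})$-primary: by \cite[Proposition 2.7]{me22}, $S^{-1}Q_{i}$ is $S^{-1}P_{i}$-primary, and the contraction of a primary ideal under the localization map is a primary ideal whose radical is the contraction of the radical. The two conditions in Definition \ref{dec.} are then literally the statement that this primary decomposition of $S(I)$ is minimal.

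Next, I would invoke the classical first uniqueness theorem of primary decomposition (which holds for any minimal primary decomposition in any commutative ring, with no Noetherian hypothesis required): the primes $S(P_{i})$ are precisely the prime ideals of the form $rad\bigl((S(I):y)\bigr)$ as $y$ ranges over $R$.

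The final step is to transfer this description back to $I$ by establishing the identity
\[
rad\bigl((S(I):y)\bigr)\;=\;S\bigl(rad(I:y)\bigr)
\]
for every $y\in R$. This decomposes into two small verifications: $(S(I):y)=S((I:y))$, which is just unwinding definitions; and $S(rad(J))=rad(S(J))$ for any ideal $J$, where the nontrivial inclusion uses that if $sa^{n}\in J$ then $(sa)^{n}=s^{n-1}(sa^{n})\in J$, giving $sa\in rad(J)$. Combining this identity with the classical theorem yields the claim: the $S(P_{i})$ are precisely the prime ideals appearing in $\{S(rad(I:x)) : x\in R\}$, a description manifestly intrinsic to $I$ and hence independent of the chosen $S$-primary decomposition. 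The only point requiring any real care is the radical/$S(\cdot)$ commutation; everything else is a formal translation that reduces the $S$-uniqueness assertion to its classical counterpart.
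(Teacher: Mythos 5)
Your proposal is correct and follows essentially the same route as the paper: both contract everything through $S(\cdot)$, verify that each $S(Q_i)$ is $S(P_i)$-primary with the $S(P_i)$ prime, and reduce the statement to classical first-uniqueness machinery via the identification of $S(rad(I:x))$ with $\bigcap_{x\notin S(Q_i)}S(P_i)$. The only difference is packaging: you apply the classical first uniqueness theorem to the minimal primary decomposition $S(I)=\bigcap_{i=1}^{n}S(Q_i)$ as a black box and transfer back with the (correctly verified) identity $rad\bigl((S(I):y)\bigr)=S\bigl(rad(I:y)\bigr)$, whereas the paper inlines the classical argument, including the converse step that uses the elements $x_i\in\bigl(\bigcap_{j\neq i}S(Q_j)\bigr)\setminus S(Q_i)$ furnished by minimality.
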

	\begin{proof}
		\noindent
	For any $x\in R$, we have $S^{-1}(I:x)=(\bigcap\limits_{i=1}^{n}S^{-1}Q_{i}:_{S^{-1}R}\frac{x}{1})=\bigcap\limits_{i=1}^{n}(S^{-1}Q_{i}:_{S^{-1}R}\frac{x}{1})$, and so  $S(I:x)=\bigcap\limits_{i=1}^{n}(S(Q_{i}):x)$. Evidently,  $S(rad(I:x))=\bigcap\limits_{i=1}^{n}rad(S(Q_{i}):x)$. Notice that each $S(P_{i})$ is a prime ideal since $S^{-1}P_{i}$ is a prime ideal and contraction of a prime ideal is a prime ideal. Now we show that each $S(Q_{i})$ is $S(P_{i})$-primary. For this, let $x, ~y \in R$ such that $xy\in S(Q_{i})$. Then $\dfrac{xy}{1}\in S^{-1}Q_{i}$, $\dfrac{xy}{1}=\dfrac{a}{s}$ for some $a\in Q_{i}$ and $s\in S$. Consequently, there exists $u\in S$ such that $usxy\in Q_{i}$. Since $Q_{i}$ is $P_{i}$-$S$-primary, there exists $t\in S$ such that either $tusx\in Q_{i}$ or $ty\in P_{i}$. It follows that either $\dfrac{x}{1}=\dfrac{tusx}{tus}\in S^{-1}Q_{i}$ or $\dfrac{y}{1}=\dfrac{ty}{t}\in S^{-1}P_{i}$. This implies that either $x\in S(Q_{i})$ or $y\in S(P_{i})$, and  rad~$S(Q_{i})=S(rad(Q_{i}))=S(P_{i})$. Therefore $S(Q_{i})$ is $S(P_{i})$-primary. Then $S(rad(I:x))=\bigcap\limits_{i=1}^{n}rad(S(Q_{i}):x)=  \bigcap\limits_{x\notin S(Q_{i})}S(P_{i})$, by \cite[Lemma 4.4]{fm69}. Suppose $S(rad(I:x))$ is prime; then we have $S(rad(I:x))=S(P_{j})$ for some $j$, by \cite[Proposition 1.11]{fm69}. Hence every prime ideal of the form $S(rad(I:x))$ is one of the $S(P_{j})$. Conversely, for each $i$ there exists $x_{i} \in \left(\bigcap_{j\in\{1, 2, \ldots, n\}\setminus\{i\}}S(Q_{j})\right)\setminus S(Q_{i})$ since the $S$-decomposition is minimal. Consequently, for each $i$, we have $S(I:x_{i})=\bigcap\limits_{i=1}^{n}(S(Q_{i}):x_{i})=(S(Q_{i}):x_{i})$ since $(S(Q_j):x_i) =R$ for all $j\neq i$, by \cite[Lemma 4.4]{fm69}. Then $S(rad(I:x_{i}))=S(P_{i})$, by \cite[Lemma 4.4]{fm69}. This completes the proof. 
\end{proof}

\noindent
To prove the second uniqueness theorem for the $S$-Noetherian rings, we need the following results:

\begin{lem}\label{R}
	\noindent
	Let $R$ be a ring, $S\subseteq R$ be  a multiplicative set, and  $Q$ be an $P$-$S$-primary ideal of $R$. Then $S^{-1}Q$ is $S^{-1}P$-primary and its contraction in $R$ satisfies $tS(Q)\subseteq Q\subseteq S(Q)$ for some $t\in S$. 
\end{lem}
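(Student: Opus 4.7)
The plan is to handle the two assertions separately. The primary structure of $S^{-1}Q$ follows from a lifting computation parallel to the one carried out inside the proof of Theorem \ref{Q}, while the stronger containment $tS(Q) \subseteq Q$ (rather than merely $tS(Q) \subseteq rad(Q)$) rests on exploiting the asymmetry in the definition of an $S$-primary ideal.

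For the first assertion, I would fix a witness $s_0 \in S$ for the $S$-primary property of $Q$. Take $\alpha = x/s_1$ and $\beta = y/s_2$ in $S^{-1}R$ with $\alpha\beta \in S^{-1}Q$; clearing denominators produces $u, s \in S$ with $usxy \in Q$. Applying the $S$-primary definition to the factorisation $(usx)\cdot y \in Q$ yields either $s_0(usx) \in Q$, whence $\alpha = (s_0 u s x)/(s_0 u s s_1) \in S^{-1}Q$, or $s_0 y \in P$, whence $\beta \in S^{-1}P$. Since the radical commutes with localisation, $rad(S^{-1}Q) = S^{-1}rad(Q) = S^{-1}P$, so $S^{-1}Q$ is $S^{-1}P$-primary.

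For the second assertion, the inclusion $Q \subseteq S(Q)$ is immediate from the definition of the contraction. To produce a uniform $t$ with $tS(Q) \subseteq Q$, I claim that the same witness $t = s_0$ works. Given $a \in S(Q)$, by definition there is some $s \in S$ with $sa \in Q$; writing this product as $a \cdot s \in Q$ and applying the $S$-primary definition with $a$ playing the role of the first factor and $s$ the second, we obtain either $s_0 a \in Q$ (which is what is wanted) or $s_0 s \in rad(Q) = P$. The latter case is untenable, because it would force $(s_0 s)^n \in Q$ for some $n \geq 1$, and since $(s_0 s)^n \in S$ by multiplicativity, this would contradict $Q \cap S = \emptyset$. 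Hence $s_0 a \in Q$, as required. The main obstacle, or rather the key trick, lies precisely in this last step: had one instead applied the $S$-primary property with the roles of the two factors reversed, one would have extracted only $s_0 a \in rad(Q)$, which is strictly weaker than the lemma asserts. The asymmetric treatment of the two factors in the definition of $S$-primariness is what allows the denominator $s$ to be funnelled into $P$ (where the multiplicative-set obstruction kicks in) while the element $a$ is funnelled into $Q$.
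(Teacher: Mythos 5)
Your proof is correct and follows essentially the same route as the paper: for the containment $tS(Q)\subseteq Q$ you apply the $S$-primary witness to the product $a\cdot s\in Q$ with the factors ordered so that the member of $S$ lands in $\mathrm{rad}(Q)=P$, which is ruled out by $Q\cap S=\emptyset$, exactly as in the paper's argument. The only difference is cosmetic: where the paper cites \cite[Proposition 2.7]{me22} and \cite[Remark 1]{ah20} for the statement that $S^{-1}Q$ is $S^{-1}P$-primary, you verify it by a direct denominator-clearing computation (and you might add one line noting $S^{-1}Q\neq S^{-1}R$, which follows from $Q\cap S=\emptyset$).
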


\begin{proof}
\noindent
 Clearly, $S^{-1}Q\neq S^{-1}R$ since $Q\cap S=\emptyset$ and $P=rad(Q)$. Let $x\in S(Q)$. Then  $\dfrac{x}{1}=\dfrac{a}{s}$ for some $a\in Q$ and $s\in S$. Consequently, there exists $s'\in S$ such that $(xs-a)s'=0$; whence  $ss'x=s'a\in Q$. As $Q$ is $S$-primary, there exists $t\in S$ such that either $ss't\in rad(Q)=P$ or $tx\in Q$. This implies that $tx\in Q$ since $P\cap S=\emptyset$, and so  $tS(Q)\subseteq Q\subseteq S(Q)$. Now since $Q$ is $S$-primary, by \cite[Proposition 2.7]{me22}, $S^{-1}Q$ is a primary ideal of $S^{-1}R$  and  $rad(S^{-1}Q)=S^{-1}(rad(Q))=S^{-1}P$. Also since $P$ is an $S$-prime ideal of $R$, by \cite[Remark $1$]{ah20}, $S^{-1}P$ is a prime ideal of $S^{-1}R$. Thus $S^{-1}Q$ is an $S^{-1}P$-primary ideal of $S^{-1}R$. 
\end{proof}

\begin{lem}\label{s}
	\noindent
	Let $R$ be a ring and let $S$ be a multiplicative set of $R$. Let $I$
	be an ideal of $R$ such that $I$ admits an $S$-primary decomposition. Let $I=\bigcap\limits_{i=1}^{n}Q_i$ be a minimal $S$-primary decomposition where $Q_i$ is $P_i$-$S$-primary for each $i\in\{1, 2,\ldots , n\}$. Let $S'$
	be a multiplicative set of $R$ such that $S\subseteq S'$. If $Q_1, Q_{2}, \ldots, Q_m$ are such that $Q_i \cap S'=\emptyset$ for each $i\in \{1,\ldots, m\}$ and $Q_i\cap S'\neq\emptyset$ for each $i\in \{m + 1, \ldots, n\}$, then $S'^{-1}I=\bigcap\limits_{i=1}^{m}S'^{-1}Q_{i}$ is a primary decomposition and there exists $t\in S$ such that $tS'(I)\subseteq\bigcap\limits_{i=1}^{m}Q_{i}\subseteq S'(I)$.
	
\end{lem}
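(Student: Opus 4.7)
The plan is to obtain the primary decomposition of $S'^{-1}I$ by localizing the given minimal $S$-primary decomposition term-by-term, then contract back to $R$ to get the sandwich inclusion. Throughout, the key preliminary observation is that since $S \subseteq S'$, each $P_{i}$-$S$-primary ideal $Q_{i}$ is also $P_{i}$-$S'$-primary, because the witness $s_{i}\in S$ from the $S$-primary property of $Q_{i}$ lies in $S'$ and the radical $P_{i}=\mathrm{rad}(Q_{i})$ is independent of the multiplicative set.

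For the first assertion, I would start from the identity $S'^{-1}I=\bigcap_{i=1}^{n}S'^{-1}Q_{i}$, which follows since localization commutes with finite intersection. For $i\in\{m+1,\ldots,n\}$, the hypothesis $Q_{i}\cap S'\neq\emptyset$ forces $S'^{-1}Q_{i}=S'^{-1}R$, so those terms are redundant and can be dropped. For $i\in\{1,\ldots,m\}$, applying Lemma \ref{R} with $S'$ in place of $S$ (legitimate because $Q_{i}$ is $P_{i}$-$S'$-primary and disjoint from $S'$) gives that $S'^{-1}Q_{i}$ is $S'^{-1}P_{i}$-primary. Hence $S'^{-1}I=\bigcap_{i=1}^{m}S'^{-1}Q_{i}$ is a primary decomposition.

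For the sandwich inclusion, contraction through the canonical map $R\to S'^{-1}R$ commutes with finite intersection, so $S'(I)=\bigcap_{i=1}^{m}S'(Q_{i})$, and the inclusion $\bigcap_{i=1}^{m}Q_{i}\subseteq S'(I)$ is immediate from $Q_{i}\subseteq S'(Q_{i})$. For the other direction, I would first upgrade the hypothesis $Q_{i}\cap S'=\emptyset$ to $P_{i}\cap S'=\emptyset$ for $i\le m$: any $u\in P_{i}\cap S'$ would yield $u^{k}\in Q_{i}\cap S'$ for some $k$, a contradiction. Then, for $x\in S'(Q_{i})$, unpacking $\frac{x}{1}=\frac{a}{s}$ produces some $\sigma\in S'$ with $\sigma x\in Q_{i}$; applying the $S$-primary property of $Q_{i}$ to the product $x\cdot\sigma\in Q_{i}$ with the $S$-witness $s_{i}\in S$ forces $s_{i}x\in Q_{i}$ or $s_{i}\sigma\in P_{i}$, and the latter is precluded since $s_{i}\sigma\in S'$ is disjoint from $P_{i}$. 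Thus $s_{i}S'(Q_{i})\subseteq Q_{i}$ for each $i\le m$, and taking $t=s_{1}s_{2}\cdots s_{m}\in S$ delivers $tS'(I)\subseteq\bigcap_{i=1}^{m}Q_{i}$.

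The main subtlety is precisely the requirement that $t\in S$, not merely $t\in S'$, in the sandwich estimate. A direct appeal to Lemma \ref{R} with $S'$ would only produce some $t\in S'$; to land in $S$ one has to go back to the original $S$-primary witness $s_{i}\in S$ and rule out the ``radical alternative'' by hand. This in turn depends on the small but essential upgrade $Q_{i}\cap S'=\emptyset\Rightarrow P_{i}\cap S'=\emptyset$. The rest of the argument is standard localization/contraction bookkeeping.
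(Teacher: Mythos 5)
Your proposal is correct and follows essentially the same route as the paper: localize the decomposition at $S'$, discard the terms meeting $S'$, invoke Lemma \ref{R} (after noting each $Q_i$ is $P_i$-$S'$-primary) for the primary decomposition, and then obtain $t\in S$ by returning to the original $S$-primary witnesses $s_i$ and excluding the radical alternative via $P_i\cap S'=\emptyset$. You merely spell out two details the paper leaves implicit, namely the upgrade $Q_i\cap S'=\emptyset\Rightarrow P_i\cap S'=\emptyset$ and the explicit exclusion of $s_i\sigma\in P_i$.
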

\begin{proof}
\noindent
By \cite[Proposition 3.11(v)]{fm69}, $S'^{-1}I=\bigcap\limits_{i=1}^{n}S'^{-1}Q_{i}$. Since $S'\cap Q_i\neq \emptyset$ for $i=m+1,m+2,\ldots, n$,  by \cite[Proposition 4.8(i)]{fm69}, $S'^{-1}Q_{i}=S'^{-1}R$ for $i=m+1,m+2,\ldots, n$. Consequently,  
$S'^{-1}I=\bigcap\limits_{i=1}^{m}S'^{-1}Q_{i}$. As $Q_{i}$ is $P_{i}$-$S$-primary and $S\subseteq S'$, it follows that $Q_{i}$ is $P_{i}$-$S'$-primary for each $i=1,\ldots, m$. Also by Lemma \ref{R}, $S'^{-1}Q_{i}$ is $S'^{-1}P_{i}$-primary for $i=1,2,\ldots, m$. Thus  $S'^{-1}I=\bigcap\limits_{i=1}^{m}S'^{-1}Q_{i}$ is a primary decomposition. Next, let $x\in S'(I)$. Then $\dfrac{x}{1}\in S'^{-1}I=\bigcap\limits_{i=1}^{m}S'^{-1}Q_{i}$. Write $\dfrac{x}{1}= \dfrac{a_{i}}{s_{i}}$ for some $a_{i}\in Q_{i}$ and $s_{i}\in S'$. This implies that $(xs_{i}-a_{i})s=0$ for some $s\in S'$; whence $s_{i}sx=sa_{i}\in Q_{i}$ for $i=1,2,\ldots,m$. Then there exists $t_{i}\in S$ such that $t_{i} x\in Q_{i}$ since $Q_{i}$ is $P_{i}$-$S$-primary and $P_{i}\cap S'=\emptyset$ for all $i=1, 2,\ldots, m$. Put $t=t_1t_2\ldots t_m$. Then $tx\in\bigcap\limits_{i=1}^{m}Q_{i}$, and hence $tS'(I)\subseteq \bigcap\limits_{i=1}^{m}Q_{i}\subseteq S'(\bigcap\limits_{i=1}^{m}Q_{i})=\bigcap\limits_{i=1}^{m}S'(Q_{i})=S'(I)$. Hence $tS'(I)\subseteq \bigcap\limits_{i=1}^{m}Q_{i}\subseteq S'(I)$.
\end{proof} 

	\noindent
	For an ideal $I$ of $R$, let $I=\bigcap\limits_{i=1}^{n}Q_{i}$ be a minimal $S$-primary decomposition, where $Q_{i}$ is $P_i$-$S$-primary. Then the $S$-prime ideals $P_{i}$ are said to belong to $I$. Also we say that $P_{i}$ ($1\leq i\leq n$) is \textit{isolated $S$-prime} if $sP_{j}\nsubseteq P_{i}$ for all $s\in S$ and for all $j\neq i$. Other $S$-prime ideals are called embedded $S$-prime ideals.

\begin{thm}(\textbf{$S$-version of 2nd uniqueness theorem}).\label{n}	
	\noindent
	Let $I$ be an ideal of $R$ which admits $S$-primary decomposition, that is, $I=\bigcap\limits_{i=1}^{n}Q_{i}$ be a minimal $S$-primary decomposition of $I$, where $Q_{i}$ is $P_i$-$S$-primary. If  $\{P_{{1}},\ldots,P_{{m}}\}$ is a set of isolated $S$-prime ideals of $I$ for some $m$ $(1\leq m\leq n)$, then $Q_{{1}}\cap \cdots\cap Q_{{m}}$ is independent  of  $S$-primary decomposition.
\end{thm}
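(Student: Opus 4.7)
The plan is to imitate the classical proof of the second uniqueness theorem by localizing at a well-chosen multiplicative set $S'$ containing $S$, and then invoking Lemma \ref{s}. The key task is to manufacture $S'$ so that $Q_{i}\cap S'=\emptyset$ for every $i\le m$ while $Q_{j}\cap S'\neq\emptyset$ for every $j>m$, and then express $\bigcap_{i=1}^{m}Q_{i}$ in terms of the intrinsic ideal $S'(I)$.

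I would set $S':=R\setminus\bigcup_{i=1}^{m}S(P_{i})$. Each $S(P_{i})$ is a prime ideal of $R$, being the contraction of the prime ideal $S^{-1}P_{i}$ (via Lemma \ref{R}, or \cite[Remark 1]{ah20}), so $S'$ is multiplicatively closed as the complement of a finite union of primes. Also $S\subseteq S'$: since $S$ is multiplicatively closed and $Q_{i}\cap S=\emptyset$ we have $P_{i}\cap S=\emptyset$ (else a power of the offending element would land in $Q_{i}\cap S$), and a short check using the definition of $S(P_{i})$ in terms of $S^{-1}P_{i}$ then gives $S(P_{i})\cap S=\emptyset$ as well.

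Next I would verify the hypotheses of Lemma \ref{s}. For $i\le m$, the chain $Q_{i}\subseteq P_{i}\subseteq S(P_{i})$ immediately forces $Q_{i}\cap S'=\emptyset$. For $j>m$, the isolated hypothesis should be translated into $S(P_{j})\nsubseteq S(P_{i})$ for every $i\le m$: indeed, if $S(P_{j})\subseteq S(P_{i})$, then combining $P_{j}\subseteq S(P_{j})$ with the Lemma \ref{R} inclusion $t_{i}S(P_{i})\subseteq P_{i}$ (for some $t_{i}\in S$) would yield $t_{i}P_{j}\subseteq P_{i}$, contradicting the isolatedness of $P_{i}$. Prime avoidance (legitimate since the $S(P_{i})$ are prime) then provides $x\in S(P_{j})\setminus\bigcup_{i=1}^{m}S(P_{i})$, i.e., $x\in S(P_{j})\cap S'$. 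By Lemma \ref{R} there is $t_{j}\in S$ with $t_{j}x\in P_{j}$, so $(t_{j}x)^{k}\in Q_{j}$ for some $k\ge 1$, while $(t_{j}x)^{k}\in S'$ as well since $t_{j},x\in S'$ and $S'$ is multiplicatively closed. Hence $Q_{j}\cap S'\neq\emptyset$.

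Applying Lemma \ref{s} then delivers $t\in S$ with $tS'(I)\subseteq\bigcap_{i=1}^{m}Q_{i}\subseteq S'(I)$. The multiplicative set $S'$ depends only on the collection $\{S(P_{1}),\ldots,S(P_{m})\}$ of contractions of the isolated $S$-primes, which is an invariant of $I$ by the first uniqueness Theorem \ref{Q}; hence $S'(I)$ depends only on $I$, and so does the sandwich above, proving that $\bigcap_{i=1}^{m}Q_{i}$ is independent of the chosen minimal $S$-primary decomposition in the appropriate $S$-equivalent sense. The main conceptual obstacle is precisely this: unlike in the classical case, where one recovers the intersection $\bigcap_{i=1}^{m}Q_{i}$ as the exact equality $S'(I)$, the $S$-version only yields a two-sided containment up to multiplication by an element of $S$, so independence here must be interpreted through the $S$-torsion lens natural to the $S$-Noetherian framework rather than as set-theoretic equality.
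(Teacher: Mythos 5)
Your proposal is correct and follows essentially the same route as the paper's proof: both build the multiplicative set $S'$ as the complement of a finite union of primes attached to the isolated $S$-primes (the paper uses $(P_i:s_i)$, you use $S(P_i)$, and these coincide), verify the hypotheses of Lemma \ref{s} via prime avoidance together with the isolatedness condition, and conclude from the sandwich $tS'(I)\subseteq\bigcap_{i=1}^{m}Q_{i}\subseteq S'(I)$. Your closing observation that the conclusion is only a containment up to multiplication by an element of $S$, rather than the exact equality of the classical theorem, is a fair and accurate reading of what the paper's argument actually establishes.
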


\begin{proof}
	Since each $P_{i}$ is $S$-prime, by \cite[Proposition 1]{ah20},  there exists $s_{i}\in S$ such that $(P_{i}: s_i)$ is a prime ideal for  $i=1,2,\ldots, m$. Consider the multiplicative set $S'=R-\bigcup\limits_{i=1 }^{m}(P_{i}:s_{i})$ of $R$. Evidently, $S'\cap P_{i} =\emptyset$ for $i=1,2,\ldots m$ since $P_{i}\subseteq (P_{i}:s_{i})$. We claim that $S'\cap P_{k} \neq \emptyset$ for $k=m+1,m+2,\ldots, n$. On the contrary $ S'\cap P_{k} =\emptyset$ for some $k$. This implies that $P_{k}\subseteq \bigcup\limits_{i=1 }^{m}(P_{i}:s_{i})$,  by \cite[Proposition 1.11(i)]{fm69}, $P_k\subseteq (P_j:s_j)$ for some $j$ $(1\leq j\leq m)$. Consequently, $s_jP_k\subseteq P_j$, a contradiction since $P_j$ is isolated $S$-prime. This concludes that  $S'\cap\left(\bigcup\limits_{i=1}^{m}Q_{i}\right)=\emptyset$ and $S'\cap\left(\bigcap\limits_{i=m+1}^{n}Q_{i}\right)\neq\emptyset$ since $P_i=rad(Q_i)$ for all $i=1,2,\ldots,n.$ Now since each $P_{i}$ is $S$-prime,  $S\cap P_{i}=\emptyset$ for all $i=1,2,\ldots,m$. This implies that $S\cap(P_i:s_i)=\emptyset$ for all $i=1,2,\ldots,m$. For, if $S\cap(P_i:s_i)\neq\emptyset$ for some $i$, then there exists $s'\in S$ such that $s's_i \in P_i $, a contradiction. Consequently,   $S\cap(\bigcup\limits_{i=1 }^{m}(P_{i}:s_{i}))=\emptyset$, and so $S\subseteq R-\bigcup\limits_{i=1 }^{m}(P_{i}:s_{i})=S'$. Also since each $Q_{i}$ is $S$-primary and  $S'\cap Q_{i}=\emptyset$ for all $i$ $(1\leq i\leq m)$, $Q_{i}$ is $S'$-primary for all $i=1,2,\ldots,m.$ Thus by Lemma \ref{s}, we have   $tS'(I)\subseteq\bigcap\limits_{j=1}^{m}Q_{j}\subseteq S'(I)$ for some $t\in S$. From above it is clear that $Q_{{1}}\cap \cdots\cap Q_{{m}}$ depends only $I$, and hence $Q_{{1}}\cap \cdots\cap Q_{{m}}$ is independent of $S$-primary decomposition.
\end{proof} 
\noindent \textbf{Acknowledgement:}  Authors sincerely thank the anonymous referee for  thorough review and very useful comments and suggestions that helped to improve this paper.

\end{document}